\newtheorem{definition}{Definition}
\newtheorem{theorem}{Theorem}
\newtheorem{lemma}{Lemma}
\newtheorem{corollary}{Corollary}
\newcommand{\R}{\mathbb R}
\newcommand{\N}{\mathbb N}
\newcommand{\Exp}{\textrm{Exp}}
\newcommand*\diff{\mathop{}\!\mathrm{d}}
\newcommand*\dist{\mathop{}\!\mathrm{d}}
\providecommand{\keywords}[1]
{
  \small	
  \textbf{\textit{Keywords --- }} #1
}
\title{On the Continuity of Geodesically Convex Functions on Riemannian Manifolds}
\author{Victor-Emmanuel Brunel\thanks{Centre de Recherche en Economie et Statistique (CREST), Palaiseau, France}  \and Pierre Pansu \thanks{Université Paris-Saclay, CNRS, Laboratoire de Mathématiques d'Orsay, Orsay, France}}
\date{}
\begin{document}

\maketitle

\begin{abstract}
In this short note, we prove that all geodesically convex functions defined on a Riemannian manifold are continuous in the interior of their domain. This is a folklore result, but to the best of our knowledge, there is only one available proof, which is largely cited. However, it contains a significant gap, which we fill here. We also discuss extensions of this result beyond the Riemannian setting.

\end{abstract}

\keywords{Riemannian geometry, iterated barycenters, geodesic convexity}

\section{Introduction}

In Euclidean spaces, convex functions are automatically continuous in the interior of their domain \cite[Theorem 10.1]{rockafellar1970convex}. This is a classical and simple result in finite dimensional convex geometry, and the analog statement is often granted to be true on Riemannian manifolds: Geodesically convex functions are automatically continuous in the interior of their domain. In fact, to the best of our knowledge, the only available proof of such a statement appears in \cite{udricste1977continuity}, and later in a book by the same author \cite[Chapter 3, Theorem 3.6]{udricste1994convex}. However, the proof has a gap since it implicitly assumes that a geodesically convex function is automatically locally bounded from above, which, then, renders the rest of the proof very easy. However, this fact is not justified and, as we found out, it requires some work. In these notes, we provide a complete proof of the statement. 

\section{Main result}

Let $(M,g)$ be a smooth Riemannian manifold of dimension $n\geq 1$. We denote by $\dist$ the distance inherited from $g$ on $M$. For $x\in M$, we denote by $T_xM$ the tangent space to $M$ at $x$ and by $g(x;\cdot,\cdot)$ the Euclidean product on $T_xM$ given by the metric $g$.

Given any $p,q\in M$, we call a geodesic from $p$ to $q$ any path $\gamma:[0,1]\to M$ satisfying $\gamma(0)=p, \gamma(1)=q$ and $\dist(\gamma(s),\gamma(t))=|s-t|\dist(x,y)$ for all $s,t\in [0,1]$. We let $\Gamma_{p,q}$ be the (possibly empty) collection of all geodesics from $p$ to $q$ (usually called \textit{minimizing geodesics} in Riemannian geometry). 

A subset $C\subset M$ is called geodesically convex if and only if for all $p,q\in C$ and all $\gamma\in\Gamma_{p,q}$, $\gamma([0,1])\subseteq C$. A function $f:M\to\R\cup\{\infty\}$ is called geodesically convex if and only for all $p,q\in M$, $\gamma\in \Gamma_{p,q}$ and $t\in [0,1]$, $f(\gamma(t))\leq (1-t)f(p)+tf(q)$. The domain of such a function is the set $f^{-1}(\R)$ -- it is necessarily geodesically convex. 

The main result of this note is the following.

\begin{theorem} \label{thm:Main}
	Any geodesically convex function on $M$ is locally Lipschitz in the interior of its domain. 
\end{theorem}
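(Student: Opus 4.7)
The plan is to first establish that a geodesically convex $f$ is locally bounded above near any interior point $p$ of its domain --- which the authors identify as the real gap in previous proofs --- and then to upgrade this to local Lipschitz continuity by standard one-dimensional convex-function arguments along geodesics through $p$.

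For the local upper bound, I would work in a geodesically convex ball $B(p, r_0) \subset \textrm{int}(\textrm{dom}(f))$, identified via the chart $\exp_p$ with a Euclidean ball in $T_pM$; $r_0$ is chosen small enough that the chart is bi-Lipschitz to the identity and that any two points of the ball are joined by a unique minimizing geodesic lying in the ball. Inside $B(p, r_0)$ I would pick $n+1$ points $p_0, \dots, p_n$ whose preimages $v_i = \exp_p^{-1}(p_i)$ form the vertices of a regular Euclidean $n$-simplex around $0$. The key object is an \emph{iterated geodesic barycenter} map $\Phi : \Delta^n \to M$, defined recursively face by face: on the $k$-face with vertices $p_{i_0},\dots,p_{i_k}$ and weights $(\lambda_0,\dots,\lambda_k)$ with $\lambda_k < 1$, $\Phi(\lambda)$ is the point at parameter $\lambda_k$ along the geodesic from the value of $\Phi$ on the opposite $(k{-}1)$-face with rescaled weights $(\lambda_0,\dots,\lambda_{k-1})/(1-\lambda_k)$ to the vertex $p_{i_k}$. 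By geodesic convexity of $f$ and induction on $k$, one gets $f \circ \Phi \leq M_0 := \max_i f(p_i)$ on all of $\Delta^n$.

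The crux --- and what I expect to be the main obstacle --- is to show that $\Phi(\Delta^n)$ contains an open neighborhood of $p$, i.e.\ that the iterated geodesic barycenters really sweep out an $n$-dimensional region. The idea is to compare $\Phi$ to its Euclidean analog $\Phi_{\mathrm{eucl}} : \Delta^n \to T_pM$ obtained by running the same recursion with straight segments in place of geodesics; by the usual associativity of affine convex combinations, $\Phi_{\mathrm{eucl}}$ is simply the standard affine parametrization $\lambda \mapsto \sum_i \lambda_i v_i$ of the Euclidean simplex, so it sends the center $\lambda^\ast = (1/(n{+}1),\dots,1/(n{+}1))$ to $0$ with nondegenerate differential. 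Because the Christoffel symbols vanish at $p$, for $r_0$ small enough $\exp_p^{-1}\!\circ\Phi$ is a $C^1$-small perturbation of $\Phi_{\mathrm{eucl}}$ and thus still has nondegenerate differential at $\lambda^\ast$ while mapping $\lambda^\ast$ close to $0$. Either the inverse function theorem applied at $\lambda^\ast$, or a Brouwer-degree argument comparing $\Phi$ and $\Phi_{\mathrm{eucl}}$ on $\partial\Delta^n$, then shows that $\Phi(\Delta^n)$ covers a whole neighborhood of $p$, and we conclude $f \leq M_0$ there.

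With $f$ locally bounded above on some $B(p, r)$, the remainder is routine. For the lower bound, given $q \in B(p, r/2)$ I would extend the geodesic from $q$ through $p$ to a point $q'$ with $\dist(p, q') = r/2$ on the opposite side; convexity at the midpoint $p$ gives $f(p) \leq \tfrac12(f(q) + f(q'))$ and hence $f(q) \geq 2f(p) - M_0$. Once $f$ is bounded above and below on a ball, the classical one-dimensional argument applied along geodesics --- extending any two nearby points to the boundary sphere and using the three-slopes inequality for the real convex restriction of $f$ to that geodesic --- yields a local Lipschitz constant controlled by the oscillation of $f$ and the radius, completing the proof.
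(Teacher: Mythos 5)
Your proposal follows essentially the same route as the paper: iterated geodesic barycenters of $d+1$ points, comparison of the barycenter map with its Euclidean (affine) analogue at small scale to show that its image covers a neighborhood, hence a local upper bound for $f$, after which the classical one-dimensional convexity argument gives local Lipschitz continuity. The only difference is in technical packaging: where you invoke the vanishing of the Christoffel symbols at $p$ to assert that $\exp_p^{-1}\circ\Phi$ is a $C^1$-small perturbation of the affine map, the paper makes exactly this point precise through the rescaled metrics $g_h(p;u,v)=g(hp;u,v)$ and an implicit function theorem argument uniform in $h$, which is the clean way to normalize the shrinking scale $r_0\to 0$ implicit in your perturbation claim.
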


In particular, geodesically convex functions are continuous in the interior of their domain. We prove that a geodesically convex function is necessarily locally bounded above in the interior of its domain. The rest of the proof is skipped, as the remaining arguments already appear in \cite{udricste1977continuity}. In order to proceed, we introduce iterated barycenters and iterated barycentric hulls. 

Given a point $x_0\in M$, we call a \textit{totally normal neighborhood of} $x_0$ any open ball $U$ in $M$, centered at $x_0$, that is the diffeomorphic image of an open ball of $T_{x_0}M$ centered at $0$ by $\Exp_{x_0}$, that is geodesically convex and such that $\Gamma_{p,q}$ is a singleton for all $p,q\in U$. Existence of totally normal neighborhoods follows from classical results.

\begin{definition} \label{def:iterated_bary}
	Let $U$ be a totally normal neighborhood in $M$. We define inductively a sequence of functions $B_k:U^k\times [0,1]^{k-1}\to U$, $k=1,2,\ldots$ as follows. 
\begin{itemize}
	\item For all $p\in U$, let $B_1(p)=p$.
	\item For $k\geq 2$, $p_1,\ldots,p_k\in U$ and $t_2,\ldots,t_k\in [0,1]$, set $B_k(p_1,\ldots,p_k;t_2,\ldots,t_k)=\gamma(t_k)$, where $\gamma$ is the unique geodesic from $B_{k-1}(p_1,\ldots,p_{k-1};t_2,\ldots,t_{k-1})$ to $p_k$, where we use the convention that $B_{k-1}(p_1,\ldots,p_{k-1};t_2,\ldots,t_{k-1})=B_1(p_1)=p_1$ if $k=2$.
\end{itemize}
We call $B_k(p_1,\ldots,p_k;t_2,\ldots,t_k)$ the iterated barycenter of $p_1,\ldots,p_k$ with stepsizes $t_2,\ldots,t_k$. 
\end{definition}

For instance, $B_2(p_1,p_2;1/2)$ is the unique midpoint between $p_1$ and $p_2$. Note that the definition is non-ambiguous, since geodesics are unique in $U$. However, $B_k(p_1,\ldots,p_k;t_2,\ldots,t_k)$ depends on the ordering of the points $p_1,\ldots,p_k$ in general. 

Given $p_1,\ldots,p_k$ in a totally normal neighborhood of $M$, the set $\{B_k(p_1,\ldots,p_k;t_2,\ldots,t_k):0\leq t_2,\ldots,t_k\leq 1\}$ is called the iterated barycentric hull of $p_1,\ldots,p_k$ (which, again, depends on the ordering of $p_1,\ldots,p_k$ in general). When $k=d+1$, we call it an iterated simplex.

Our main lemma is the following.

\begin{lemma} \label{lem:Main}
	Every point of $M$ has a neighborhood that is included in an iterated simplex.
\end{lemma}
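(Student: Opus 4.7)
The plan is to exhibit, for each $x_0\in M$, a choice of $d+1$ points $p_1,\ldots,p_{d+1}$ in a totally normal neighborhood $U$ of $x_0$ (where $d=\dim M$) such that the smooth map
\[
\Phi:(0,1)^d\to M,\qquad (t_2,\ldots,t_{d+1})\mapsto B_{d+1}(p_1,\ldots,p_{d+1};t_2,\ldots,t_{d+1}),
\]
has invertible differential at some interior parameter $t^*$ with $\Phi(t^*)=x_0$. Since source and target both have dimension $d$, the inverse function theorem then immediately gives an open neighborhood of $x_0$ inside the iterated simplex.

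First I would record the Euclidean model. A straightforward induction shows that in $\R^d$,
\[
B_{d+1}^{\mathrm{Euc}}(q_1,\ldots,q_{d+1};t_2,\ldots,t_{d+1})=\sum_{k=1}^{d+1}\lambda_k(t)\,q_k,
\]
with $\lambda_1(t)=\prod_{j=2}^{d+1}(1-t_j)$ and $\lambda_k(t)=t_k\prod_{j=k+1}^{d+1}(1-t_j)$ for $k\geq 2$. These weights are nonnegative on $[0,1]^d$ and sum to $1$, and the map $t\mapsto(\lambda_1,\ldots,\lambda_{d+1})$ surjects onto the standard simplex; dropping the redundant first coordinate makes its Jacobian upper triangular with diagonal entries $\prod_{l>k}(1-t_l)$, which are nonzero for $t\in(0,1)^d$. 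Hence if $v_1,\ldots,v_{d+1}\in\R^d$ are affinely independent, the Euclidean iterated simplex equals $\conv(v_1,\ldots,v_{d+1})$ and $t\mapsto\sum_k\lambda_k(t)\,v_k$ has invertible differential at every interior $t^*$.

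I would then transport this picture to $M$ by a scaling argument in normal coordinates at $x_0$. Fix $v_1,\ldots,v_{d+1}\in T_{x_0}M$ affinely spanning $T_{x_0}M$ with $0$ in the interior of $\conv(v_1,\ldots,v_{d+1})$, and set $p_k^\varepsilon=\Exp_{x_0}(\varepsilon v_k)$. In the normal chart at $x_0$ the Christoffel symbols vanish at $0$, so the geodesic joining two points at distance $O(\varepsilon)$ from $x_0$ differs from the Euclidean segment by $O(\varepsilon^3)$ in $C^1$ norm (standard ODE perturbation of the geodesic equation). Iterating this estimate through the inductive definition of $B_k$, the rescaled map
\[
\Psi_\varepsilon(t):=\tfrac{1}{\varepsilon}\,\Exp_{x_0}^{-1}\bigl(B_{d+1}(p_1^\varepsilon,\ldots,p_{d+1}^\varepsilon;t)\bigr)
\]
converges in $C^1([0,1]^d,T_{x_0}M)$, as $\varepsilon\to 0$, to the Euclidean iterated barycenter $\Psi_0(t):=\sum_k\lambda_k(t)\,v_k$.

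Finally, I choose $t^*\in(0,1)^d$ with $\Psi_0(t^*)=0$, which exists because $0$ has strictly positive barycentric coordinates in the non-degenerate simplex $\conv(v_1,\ldots,v_{d+1})$ and these are surjectively attained by the parametrization $t\mapsto\lambda(t)$ on $(0,1)^d$; by the Euclidean analysis $d\Psi_0(t^*)$ is an isomorphism. $C^1$ convergence yields, for every sufficiently small $\varepsilon$, that $\Psi_\varepsilon$ is a local diffeomorphism on a fixed neighborhood of $t^*$ whose image contains a Euclidean ball around $\Psi_\varepsilon(t^*)$, and $\Psi_\varepsilon(t^*)\to 0$; unscaling by $\varepsilon$ produces a neighborhood of $x_0$ inside the iterated simplex based at $p_1^\varepsilon,\ldots,p_{d+1}^\varepsilon$. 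The main obstacle is the $C^1$ perturbation estimate for $\Psi_\varepsilon$: each inductive step composes a geodesic with a moving basepoint, so one must track both the smooth dependence of the exponential and logarithm maps on their basepoint and the accumulation of the $O(\varepsilon^3)$ error through the nested construction — classical, but requiring some bookkeeping.
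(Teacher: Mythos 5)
Your proposal is correct and follows essentially the same route as the paper: blow up at $x_0$ so that the (rescaled) iterated barycenter map tends to its Euclidean counterpart, check that the Euclidean map has invertible Jacobian at an interior parameter hitting $x_0$, and conclude by an inverse/implicit function theorem uniform in the scale parameter. The only difference is technical: the paper sidesteps the $C^1$-error bookkeeping you flag as the main obstacle by encoding the scaling as the smooth family of metrics $g_h(p;u,v)=g(hp;u,v)$, for which the exact identity $\gamma^{(h)}_{p,q}=h^{-1}\gamma^{(1)}_{hp,hq}$ together with smooth dependence of geodesics on their endpoints gives joint smoothness of the barycenter map in $(t,h)$ up to and including $h=0$, so a single application of the implicit function theorem in $(t,h)$ replaces your perturbation estimates.
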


\begin{proof}
Let $x_0\in M$ and let $U$ be a totally normal neighborhood of $x_0$. Without loss of generality, assume that $U$ is a Euclidean ball centered at $x_0=0$, equipped with the Riemannian metric $g$ (indeed, otherwise, we would work in a local normal chart). For $p\in U$, we identify the tangent space $T_pU$ of $U$ at $p$ with $\R^d$. For all $p,q\in U$, we denote by $\gamma_{p,q}^{(1)}$ the unique geodesic from $p$ to $q$ relative to the metric $g$.

For $h\in[-1,1]$, let $g_h$ denote the metric on $U$ defined by $g_h(p;u,v)=g(hp;u,v)$, for all $p\in U$ and $u,v\in \R^d$. In particular, $g_1=g$ and $g_0$ is a Euclidean metric on $U$. 

Fix $h\in [-1,1]\setminus\{0\}$ and let $p,q\in U$. A geodesic from $p$ to $q$ relative to $g_h$ is a smooth path $\eta$ from $p$ to $q$ with minimum length 
\begin{align*}
	L(\eta) & = \int_0^1 g_h(\eta(t);\dot{\eta}(t),\dot{\eta}(t))^{1/2}\diff t \\
	& = \int_0^1 g(h\eta(t);\dot{\eta}(t),\dot{\eta}(t))^{1/2}\diff t \\
	& = |h|^{-1}\int_0^1 g(h\eta(t);h\dot{\eta}(t),h\dot{\eta}(t))^{1/2}\diff t
\end{align*}
which is uniquely minimized for $\eta=h^{-1}\gamma_{hp,hq}^{(1)}$. Therefore, there is a unique geodesic from $p$ to $q$ relative to $g_h$ and we denote it by $\gamma_{p,q}^{(h)}$. If $h=0$, since $(U,g_0)$ is Euclidean, there is also a unique geodesic from $p$ to $q$ relative to $g_0$: It is given by $\gamma_{p,q}^{(0)}(t)=(1-t)p+tq$, for all $t\in [0,1]$.

Since the map $(p,q,t)\in U\times U\times [0,1]\mapsto \gamma_{p,q}^{(1)}(t)$ is smooth (see \cite{lee2018introduction}), we obtain that for any fixed $p,q\in U$, the map $(h,t)\in [-1,1]\times [-1,1]\mapsto \gamma_{hp,hq}^{(1)}(t)$ is smooth, and since its value is $0$ when $h=0$, the map $(h,t)\in [-1,1]\times [-1,1]\mapsto \gamma_{hp,hq}^{(h)}(t)=h^{-1}\gamma_{hp,hq}^{(1)}(t)$ is also smooth. 

Now, fix $d+1$ affinely independent points $p_1,\ldots,p_{d+1}\in U$ such that $p_1+\ldots+p_{d+1}=0$ (so, $p_1,\ldots,p_{d+1}$ are the vertices of a Euclidean simplex containing $0$ in its interior). Define the map $F:[0,1]^d\times [-1,1]\to U$ by letting $F(t_2,\ldots,t_{d+1};h)$ be the iterated barycenter of $p_1,\ldots,p_{d+1}$ with stepsizes $t_2,\ldots,t_{d+1}$, relative to the metric $g_h$. That is, we define inductively $F_1(h)=p_1$, $F_2(t_2;h)=\gamma_{p_1,p_2}^{(h)}(t_2)$ and $F_k(t_2,\ldots,t_k;h)=\gamma_{F_{k-1}(t_2,\ldots,t_{k-1};h),p_k}^{(h)}(t_k)$ for $k=3,\ldots,d+1$ and we set $F(t_2,\ldots,t_{d+1};h)=F_{d+1}(t_2,\ldots,t_{d+1};h)$. Then, the map $F$ is smooth and it satisfies that 
$$F(1/2,1/3,\ldots,1/{d+1};0)=0,$$ 
by definition of $p_1,\ldots,p_{d+1}$. Moreover, an easy calculation yields that the Jacobian of $F$ with respect to ${\bf t}=(t_2,\ldots,t_{d+1})$ at ${\bf t}=(1/2,\ldots,1/(d+1))$ and $h=0$ is given by 
$$J_{\bf t}F(1/2,\ldots,1/(d+1);0)=\frac{1}{d+1}\left(\begin{matrix} 2(p_2-p_1)^\top \\ 3(p_3-\bar p_2)^\top \\ \vdots \\ (d+1)(p_{d+1}-\bar p_d)^\top \end{matrix} \right)$$
where $\bar p_j=(p_1+\ldots+p_j)/j$ for $j=2,\ldots,d+1$. Affine independence of $p_1,\ldots,p_{d+1}$ readily yields that this matrix is invertible. 

Now, by the implicit function theorem, we obtain the existence of $\varepsilon>0$, $\eta>0$ and a neighborhood $V$ of $(1/2,\ldots,1/(d+1))$ in $(0,1)^{d}$ such that for all $h\in (-\varepsilon,\varepsilon)$ and $x\in B(0,\eta)$, there is some $(t_2,\ldots,t_{d+1})\in V$ with $F(t_2,\ldots,t_{d+1};h)=x$. 
Fixing $h=\varepsilon/2$, this implies that $B(0,\eta)$ is included in the iterated barycentric hull of $p_1,\ldots,p_{d+1}$ relative to $g_h$, that is, $B(0,h\eta)$ is included in the iterated simplex spanned by $hp_1,\ldots,hp_{d+1}$. 
\end{proof}

\begin{proof}[Proof of Theorem~\ref{thm:Main}]
	Let $f:M\to\R\cup\{\infty\}$ be a geodesically convex function and let $x_0$ be in the interior of its domain. Let $U$ be a totally normal neighborhood of $x_0$ that is included in the domain of $f$. Let $p_1,\ldots,p_{d+1}\in U$ such that $x_0$ has a neighborhood $B(x_0,\eta)$ ($\eta>0$ that is included in the iterated simplex spanned by $p_1,\ldots,p_{d+1}$. Let $x\in B(x_0,\eta)$ and let $t_2,\ldots,t_{d+1}\in [0,1]$ such that $x=B_{d+1}(p_1,\ldots,p_{d+1};t_2,\ldots,t_{d+1})$. Using the notation of Definition~\ref{def:iterated_bary}, geodesic convexity of $f$ yields that  
$f(B_2(p_1,p_2;t_2))\leq (1-t_2)f(p_1)+t_2 f(p_2) \leq \max(f(p_1),f(p_2))$ and, by induction, 
\begin{align*}
	f(x) & = f(B_{d+1}(p_1,\ldots,p_{d+1};t_2,\ldots,t_{d+1})) \\
	& \leq (1-t_{d+1})f(B_d(p_1,\ldots,p_d);t_2,\ldots,t_d)+t_{d+1}f(p_{d+1}) \\
	& \leq (1-t_{d+1})\max(f(p_1),\ldots,f(p_d))+t_{d+1}f(p_{d+1}) \\
	& \leq \max(f(p_1),\ldots,f(p_{d+1})).
\end{align*}
This shows that $f$ is uniformly bounded from above on $B(x_0,\eta)$, by $\max(f(p_1),\ldots,f(p_{d+1}))$. The rest of the proof is classical. 
\end{proof}

As a consequence of Theorem~\ref{thm:Main}, we also obtain the following, where $\textrm{Vol}_g$ is the Riemannian volume measure of $(M,g)$.

\begin{corollary}
	A geodesically convex function on $M$ is differentiable $\textrm{Vol}_g$-almost everywhere.
\end{corollary}

\begin{proof}
	Let $(U,\phi)$ be any coordinate chart on $M$. That is, $U$ is an open subset of $M$ and $\phi: U\to V$ is a smooth diffeomorphism from $U$ to some open subset $V\subseteq\R^n$. Since $\phi^{-1}:V\to U$ is smooth, it is locally Lipschitz on $V$, and hence, so is $f\circ \phi^{-1}:V\to\R$. Therefore, Rademacher's theorem \cite[Theorem 3.1.6]{federer2014geometric} implies that $f\circ\phi^{-1}$ is differentiable $\lambda_n$-almost everywhere on $V$, where $\lambda_n$ is the Lebesgue measure on $\R^n$. In the coordinate chart $(U,\phi)$, the image of the Riemannian volume $\textrm{Vol}_g$ (that is, $\phi\#\textrm{Vol}_g$) has a positive density with respect to $\lambda_n$ (see, e.g., \cite[Proposition 2.41 (c)]{lee2018introduction}, hence, $f\circ\phi^{-1}$ is differentiable almost everywhere on $V$ with respect to that measure. In other words, $f$ is differentiable $\textrm{Vol}_g$-almost everywhere on $U$. Since this holds for any coordinate chart on $M$ and $M$ is second countable, this concludes the proof.
\end{proof}

\section{Beyond Riemannian manifolds}

The question of continuity of geodesically convex functions beyond Riemannian manifolds is open and seems to require more elaborate ideas. 

Fix a metric space $(M,\dist)$, which we assume to be geodesic for simplicity. That is, any two points in $M$ can be connected by at least one geodesic (using the same definition as in the previous section). Moreover, it is reasonable to assume that $(M,\dist)$ is locally compact, in order to exclude infinite dimensional linear normed spaces where there always exist non-continuous, convex (even linear) functions. The definition of geodesic convexity extends naturally to this setup. However, the following example shows that there are geodesically convex functions that are not continuous. 

Let $\tilde M=[0,1]\times \N^*$. Consider the equivalence relation given by $(x,m)\sim (y,n)$ if and only if $x=y=0$ or $x=y, m=n$. Now, let $M$ be the quotient of $\tilde M$ by this equivalence relation (that is, we glue countably many copies of $[0,1]$ at their origins). That is, $M$ can be viewed as a star with countably many branches that are copies of $[0,1]$, all glued together at one of their extremities. Define two metrics $\dist_1$ and $\dist_2$ on $M$ as follows. For $(x,m),(y,n)\in M$, set 
$$\dist_1((x,m),(y,n))=\begin{cases} |x-y|/m \mbox{ if } m=n \\ x/m+y/n \mbox{ otherwise} \end{cases}$$
and 
$$\dist_2((x,m),(y,n))=\begin{cases} |x-y| \mbox{ if } m=n \\ x+y \mbox{ otherwise.} \end{cases}$$

Then $(M,\dist_1)$ and $(M,\dist_2)$ are both geodesic. However, $(M,\dist_1)$ is locally compact (in fact, it is compact) and $(M,\dist_2)$ is not. In the sequel, we call $0$ the point $(0,n)\in M$, which does not depend on the choice of $n\geq 1$ by construction.

\begin{lemma}
The space $(M,\dist_1)$ is compact.
\end{lemma}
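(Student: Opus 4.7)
The plan is to establish sequential compactness of $(M,\dist_1)$ directly, which for metric spaces is equivalent to compactness. Take an arbitrary sequence $(z_k)_{k\geq 1}$ in $M$ and write each $z_k$ (using any representative) as $(x_k,m_k)$ with $x_k\in[0,1]$ and $m_k\in\N^*$. I would then split into two cases based on whether the sequence of branch indices $(m_k)$ is bounded or not, and show that in each case we can extract a subsequence converging in $\dist_1$.

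First, if $(m_k)$ is bounded, then by the pigeonhole principle some value $m\in\N^*$ is attained infinitely often; restricting to the subsequence where $m_k=m$, we reduce to a sequence $(x_{k_j})$ in $[0,1]$. By Bolzano–Weierstrass, some further subsequence satisfies $x_{k_{j_i}}\to x\in[0,1]$, and then $\dist_1((x_{k_{j_i}},m),(x,m))=|x_{k_{j_i}}-x|/m\to 0$. Second, if $(m_k)$ is unbounded, extract a subsequence with $m_{k_j}\to\infty$; since any representative can be used for $0$, we have
\[
\dist_1\bigl((x_{k_j},m_{k_j}),0\bigr)=\frac{x_{k_j}}{m_{k_j}}\leq \frac{1}{m_{k_j}}\longrightarrow 0,
\]
so this subsequence converges to $0$.

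There is no substantive obstacle here: the only minor point requiring care is the fact that the basepoint $0$ has multiple representatives under $\sim$, which is handled by the definition of $\dist_1$ (the formula gives $0$ whenever $x=y=0$ regardless of $m,n$), and the verification that the restriction of $\dist_1$ to the $m$-th branch is just a rescaling of the Euclidean metric on $[0,1]$, hence isometric to $[0,1/m]$ and in particular compact. One could alternatively give a covering proof by observing that for every $\varepsilon>0$ one can pick $N$ with $1/N<\varepsilon$, cover the ball $B_{\dist_1}(0,\varepsilon)$ all of branches of index $>N$, and cover each of the finitely many remaining branches by finitely many $\varepsilon$-balls; combined with completeness (which follows from the same case split applied to Cauchy sequences) this also yields compactness. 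I would present the sequential argument, as it is shorter and avoids a separate completeness verification.
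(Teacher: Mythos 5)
Your proof is correct and follows essentially the same route as the paper: a case split on the branch indices (bounded, hence constant along a subsequence, reduced to Bolzano--Weierstrass on $[0,1]$; unbounded, hence a subsequence converging to $0$ since $\dist_1((x,m),0)\leq 1/m$). No gaps to report.
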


\begin{proof}
Let $((x_p,n_p))_{p\geq 1}$ be a sequence in $M$. If $(n_p)_{p\geq 1}$ has a constant subsequence, then it is clear that the sequence $((x_p,n_p))_{p\geq 1}$ has a converging subsequence obtained by further extracting a converging subsequence of $(x_p)_{p\geq 1}$. If $(n_p)_{p\geq 1}$ does not have a converging subsequence, then one may extract from it an increasing subsequence, which, after renumbering, we rename $n_1,n_2,\ldots$ without loss of generality. Then, the renumbered sequence $((x_p,n_p))_{p\geq 1}$ converges to $0$ since $0\leq \dist_1(x_p,n_p),0)=x_p/n_p\leq 1/n_p\xrightarrow[p\to\infty]{} 0$. 
\end{proof}

\begin{lemma}
The space $(M,\dist_2)$ is not locally compact.
\end{lemma}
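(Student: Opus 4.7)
The natural candidate point where local compactness fails is the gluing point $0$. The plan is to show that $0$ has no compact neighborhood, by exhibiting, inside every ball around $0$, a sequence with no convergent subsequence.

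Concretely, fix any $r \in (0,1]$ and consider the sequence $(z_n)_{n\geq 1}$ defined by $z_n = (r/2, n) \in M$. Since $\dist_2(z_n, 0) = r/2 < r$, every $z_n$ lies in the open ball $B_{\dist_2}(0, r)$. On the other hand, for $n \neq m$, the two points $z_n$ and $z_m$ lie on distinct branches, so
\[
\dist_2(z_n, z_m) = r/2 + r/2 = r.
\]
Hence $(z_n)$ has no Cauchy subsequence and, a fortiori, no convergent subsequence.

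Now suppose, for contradiction, that $0$ admits a compact neighborhood $K$. By definition of a neighborhood in a metric space, there exists $r>0$ with $B_{\dist_2}(0,r) \subseteq K$; shrinking $r$ if necessary we may assume $r\leq 1$. The sequence $(z_n)$ constructed above then lies in $K$, so by compactness it must have a convergent subsequence in $K$, contradicting the previous paragraph. Therefore $0$ has no compact neighborhood, and $(M,\dist_2)$ is not locally compact.

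The only subtle point is being careful about the definition of ``locally compact'' (existence of some compact neighborhood at each point, rather than an open neighborhood with compact closure); the two formulations are equivalent in Hausdorff spaces, and in any case the sequence argument above defeats both. There is no real obstacle here: the construction of the non-converging sequence is forced by the fact that the branches in $(M,\dist_2)$, unlike in $(M,\dist_1)$, do not shrink as $n\to\infty$.
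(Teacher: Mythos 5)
Your proof is correct and follows essentially the same route as the paper: exhibit, inside any ball around the gluing point $0$, a sequence of points on distinct branches at a fixed positive mutual distance, hence with no convergent subsequence, so $0$ has no compact neighborhood. Your version is in fact slightly more careful than the paper's (you justify the lack of a convergent subsequence via the pairwise distances and handle the restriction $r\leq 1$ explicitly), but the idea is identical.
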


\begin{proof}
We proceed by showing that $0$ has no neighborhood that is contained in a compact set. Let $U$ be a neighborhood of $0$ and let $\varepsilon>0$ be such that $U$ contains the ball $B$ centered at $0$ with radius $\varepsilon$, that is, the set $\{(x,n)\in M:0\leq x\leq \varepsilon,n\in\N^*\}$. Consider the sequence $((\varepsilon,p))_{p\geq 1}$, which is included in $B$. Yet, it does not have any converging subsequence, so $U$ cannot be contained in a compact set. 
\end{proof}

Let $f:M\to\R$ be the function defined by setting $f((x,n))=x$, for all $(x,n)\in M$. It is easy to check that $f$ is geodesically convex for both $\dist_1$ and $\dist_2$. However, it is continuous for $\dist_2$ but not for $\dist_1$. Indeed, while the sequence $((1,p))_{p\geq 1}$ converges to $0$ for $\dist_1$, $f((1,p))=1$ does not converge to $f(0)=0$. Of course, a similar example can be constructed on $(M,\dist_2)$, by defining the geodesically convex function $g$ by $g((x,n))=nx$ for all $(x,n)\in M$ and noting that while $(1/p,p)$ goes to $0$ for $\dist_2$, $g((1/p,p))=1$ does not go to $g(0)=0$.  

Now, let us notice that for the geodesically convex function $f$ that we constructed above, the only point of discontinuity (with respect to $\dist_1$) was the origin. In fact, even though $0$ is topologically an interior point of $(M,\dist_2)$, it is not a point where all geodesics can be extended by a fixed, positive amount. The following definition is similar to that of \textit{geodesic extension property} of \cite{lewis2023basic}.

\begin{definition}
	In a geodesic space $(M,\dist)$, we say that a point $x\in M$ is geodesically interior if there exists $\varepsilon>0$ such for all geodesics $\gamma$ with $x\in\gamma([0,1])$, there exists a geodesic $\tilde\gamma$ with $x\in\tilde\gamma([0,1])$ and $\min(\dist(x,\tilde\gamma(0)),\dist(x,\tilde\gamma(1)))\geq \varepsilon$. The family of points that are not geodesically interior is called the geodesic boundary of $(M,\dist)$.
\end{definition}

With this definition, $0$ is in the geodesic boundary of $(M,\dist_1)$ but not of $(M,\dist_2)$. Note that if $(M,g)$ is a Riemannian manifold with boundary, then its boundary is automatically included in the geodesic boundary (in the previous section, we only considered Riemannian manifolds without boundary). Indeed, given any boundary point $x$ of $M$, let $n\in T_xM$ be an outward pointing normal vector and let $\gamma$ be a geodesic in $M$ starting at $x$ and with $\dot{\gamma}(0)=-n$. Then, $\gamma$ cannot be extended to negative times. 

\cite[Lemma 4.3]{lewis2023basic} states that for a locally compact, geodesic space $(M,\dist)$, continuity of a geodesically convex function $f$ at a geodesically interior point $x$ is equivalent to $f$ being locally bounded from above in a neighborhood of $x$.

\bibliographystyle{plain}
\bibliography{biblio.bib}

\end{document}